\newtheorem{theorem}{Theorem}[section]
\newtheorem{corollary}[theorem]{Corollary}
\newtheorem{lemma}[theorem]{Lemma}
\newtheorem{problem}[theorem]{Problem}
\theoremstyle{remark}
\newtheorem{example}[theorem]{Example}
\theoremstyle{definition}
\newtheorem{definition}[theorem]{Definition}
\newcommand{\parder}[3][Default]{
	\frac{\partial \ifthenelse{\equal{#1}{Default}}{}{^{#1}}#2}{
              \partial #3 \ifthenelse{\equal{#1}{Default}}{}{^{#1}}}}
\newcommand{\jac}{{\mathcal J}}
\newcommand{\hess}{{\mathcal H}}
\newcommand{\grad}{\nabla}
\newcommand{\GL}{\operatorname{GL}}
\newcommand{\GO}{\operatorname{GO}}
\newcommand{\C}{{\mathbb C}}
\newcommand{\F}{{\mathbb F}}
\newcommand{\R}{{\mathbb R}}
\newcommand{\Q}{{\mathbb Q}}
\newcommand{\Z}{{\mathbb Z}}
\newcommand{\tp}{^{\rm t}}
\newcommand{\I}{{\mathrm i}}
\title{Polynomials with constant Hessian determinants in dimension three}
\author{Michiel de Bondt\footnote{The author was supported by the Netherlands 
        Organisation for Scientific Research (NWO).} \\
        Institute for Mathematics, Astrophisics and Particle Physics \\
        Radboud University Nijmegen \\
        \emph{Email address:} M.deBondt@math.ru.nl}
\begin{document}

\maketitle

\begin{abstract}
\noindent
In this paper, we show that the Jacobian conjecture holds for gradient maps
in dimension $n \le 3$ over a field $K$ of characteristic zero. We do this by 
extending the following result for $n \le 2$ by F. Dillen to $n \le 3$:
if $f$ is a polynomial of degree larger than two in $n \le 3$ variables such that the 
Hessian determinant of $f$ is constant, then after a suitable linear transformation 
(replacing $f$ by $f(Tx)$ for some $T \in \GL_n(K)$), the Hessian 
matrix of $f$ becomes zero below the anti-diagonal. The result does not hold for larger $n$.

The proof of the case $\det \hess f \in K^{*}$ is based on the following result, 
which in turn is based on the already known case $\det \hess f = 0$: 
if $f$ is a polynomial in $n \le 3$ variables such that $\det \hess f \ne 0$, 
then after a suitable linear transformation, there exists a positive weight function $w$ 
on the variables such that the Hessian determinant of the $w$-leading part of $f$ is nonzero. 
This result does not hold for larger $n$ either (even if we replace `positive' by `nontrivial'
above).

In the last section, we show that the Jacobian conjecture holds for
gradient maps over the reals whose linear part is the identity map, by proving that 
such gradient maps are translations (i.e.\@ have degree $1$) if they satisfy 
the Keller condition. We do this by showing that this problem is the polynomial case of 
the main result of \cite{MR0319126}. For polynomials in dimension $n \le 3$, we generalize 
this result to arbitrary fields of characteristic zero.
\end{abstract}

\noindent
\emph{Key words:} Hessian determinant, Jacobian conjecture, polynomial, weighted degree,
anisotropic quadratic form.

\medskip
\noindent 
\emph{2010 Mathematics Subject Classification:} 13F20; 14R15; 15A63.

\section{Introduction}

Throughout this paper, $K$ denotes an arbitrary field of characteristic zero.
Furthermore, $x_1, x_2, x_3, \ldots$ are indeterminates and $\GL_n(K)$ is the 
group of invertible matrices of size $n \times n$ over $K$. Applying a 
\emph{linear transformation} on a polynomial $f \in K[x] = K[x_1,x_2,\ldots,x_n]$ is
just replacing $f$ by $f(Tx)$, where $Tx$ can be seen as a matrix product of the matrix 
$T \in \GL_n(K)$ and the vector $x = (x_1,x_2,\ldots,x_n)$.

Suppose that the Hessian matrix $\hess f$ of a polynomial $f \in K[x] = K[x_1,x_2,\allowbreak
\ldots,x_n]$ is zero below the anti-diagonal. Then $\hess f$ can be turned into a lower triangular
matrix by way of $\lfloor n/2 \rfloor$ row interchanges. 
Since the product of the diagonal entries of a lower triangular matrix is just its determinant,
we can deduce that the product of the elements of the anti-diagonal of $\hess f$
is $(-1)^{\lfloor n/2 \rfloor} \det \hess f$.

But in general, the Hessian of a polynomial will not be zero below the anti-diagonal.
However, if $\det \hess f \in K$ and $n \le 3 \le \deg f$, then after applying a suitable linear 
transformation on $f$, $\hess f$ will indeed be zero below the anti-diagonal, where $\deg f$ 
denotes the total degree of $f$ (in $x_1,x_2,\ldots,x_n$). This is our main result. 
As a consequence, we show that the Jacobian conjecture holds for polynomial maps over $K$ 
in dimension $n \le 3$, for which the Jacobian is symmetric. Both our main result and its 
consequence can be found in theorem \ref{dillenext}. Section \ref{first} will be devoted to 
the proof of theorem \ref{dillenext}.

Suppose next that $\det \hess f \ne 0$ for some polynomial $f \in K[x] = K[x_1,x_2,
\allowbreak \ldots,x_n]$.
Now take a hyperplane $S$ with negative slope with respect to all coordinates of $\R^n$,
which intersects the newton polytope of $f$ only at its boundary. Let $\bar{f}$ be the
part of $f$ consisting of the monomials whose multidegrees lie in $S$. Although $\det \hess f \ne 0$,
it is very possible that $\det \hess \bar{f} = 0$. However, if $n \le 3$, then after applying a 
suitable linear transformation on $f$, $\det \hess \bar{f}$ will indeed be nonzero for some 
hyperplane $S$ as above. This is our main lemma for the proof of our main result. But since it is 
interesting on its own, this lemma has become a theorem, namely theorem \ref{weight}. 

Example \ref{weightcounter} makes clear that theorem \ref{weight} is no longer true in dimensions
larger than three. In fact, in dimension four and up, it is possible that $f$ has the following 
property, in such a way that it cannot be undone by applying linear transformations: the  
property of $f$ that $\det \hess \bar{f} \ne 0$ is entirely encapsulated by the Newton polytope of $f$.
More precisely, the part of $f$ consisting of monomials whose supports lie on the boundary of
the Newton polytope of $f$ is a polynomial whose Hessian determinant is zero.

The rest of this introduction will be devoted to a historical overview of the study of
polynomials with constant Hessian determinants, followed by a closer look on linear transformations.

There are some very old papers devoted to the study of polynomials with constant Hessian determinants 
in some manner. Perhaps the oldest is an article of Paul Gordan and Max N{\"o}ther about homogeneous
polynomials with Hessian determinant zero, which appeared in 1876 in \cite{MR1509898}. 
This is the most interesting case for homogeneous 
polynomials, because if a homogeneous polynomial $h \in \C[x] = \C[x_1,x_2,\ldots,x_n]$ has a constant 
nonzero Hessian determinant, then the Hessian matrix must be constant and nonzero, so that $h$ can only
be a quadratic form.

For quadratic forms, basic linear algebra can be used 
to show that $h$ can be written as a polynomial in $n-1$ (or less) linear forms over $\C$, 
if and only if the Hessian determinant of $h$ is zero. So assume that $h$ is a homogeneous
polynomial of degree $d \ge 3$. Again by basic linear algebra, it follows that $h$ cannot be 
written as a polynomial in  $n-1$ (or less) linear forms over $\C$, in case the Hessian 
determinant of $h$ is nonzero.

But the converse may not be true. However, in \cite{MR1509898},
the authors show that $h$ can indeed be written as a polynomial in $n-1$ (or less) linear forms 
over $\C$ in case $n \le 4$ and $h$ has Hessian determinant zero, 
and give counterexamples for all $n \ge 5$ and all $d \ge 3$.
In \cite{MR2095579}, A. van den Essen and the author classify all (not necessarily homogeneous) 
polynomials $h \in K[x_1,x_2,\ldots,x_n]$ with $n \le 3$, such that the Hessian determinant of $h$ is zero,
where $K$ is a field of characteristic zero, using techniques of \cite{MR1509898}. 
We shall use these results to prove our main lemma (theorem \ref{weight}) and the 
case $\det \hess f = 0$ of our main theorem (theorem \ref{dillenext}).

In 1939 in \cite{MR1550818}, O. Keller formulated a question about constant nonzero Jacobian determinants, 
which is known as Keller's problem or the Jacobian conjecture. The Jacobian conjecture asserts that 
$F$ is invertible if $F$ satisfies the so-called \emph{Keller condition}. The Keller condition
on a polynomial map $F$ is the property that $\det \jac F$ is a nonzero constant in $K$,
where $\jac F$ is the Jacobian matrix of $F$. 
Since Hessians are Jacobians of gradient maps, we can ask ourselves whether 
the Jacobian conjecture holds for gradient maps. This was done in \cite{MR2038568}, where a positive 
answer was given in dimension $n \le 4$, for gradient maps of the form $x + H$ with $H$ homogeneous.
See also \cite{MR2076378} for this result. In \cite{MR2110519}, A. van den Essen and the author
generalized this result to dimension $n \le 5$. For dimension $n \le 4$, the condition that 
$H$ is homogeneous was weakened to that $\jac H$ is nilpotent (the nilpotency of
$\jac H$ follows by way of the condition that $\det \jac F$ is a nonzero constant
from the homogeneity of $H$).

The generalization to dimension $n \le 5$ led the authors of \cite{MR2110519} to the discovery
that the Jacobian conjecture for gradient maps is equivalent to the Jacobian conjecture, 
which they published in \cite{MR2138860}, see also \cite[Cor.\@ 1.4]{MR3179983}. 
G. Meng did the same discovery, which he published as \cite[Prop.\@ 1.4]{MR2221506}, see also
\cite[Th.\@ 1.2]{MR3179983}.
More precisely, the Jacobian conjecture in dimension $n$ follows from the Jacobian conjecture for 
gradient maps in dimension $2n$. Hence the Jacobian conjecture is about polynomials with constant 
Hessian determinants after all. We shall show that the Jacobian conjecture holds for gradient maps 
in dimension three. In dimension two, this problem has already been solved in 1991 by F. Dillen,
see below. Notice that an affirmative answer to the same problem in dimension four would imply the 
planar Jacobian conjecture.

In \cite[Th.\@ 1.3]{MR2221506}, the author proves that the Jacobian conjecture holds for real gradient 
maps in all dimensions, provided the linear part is equal to the identity map. We will reprove
this result, by showing that real gradient maps whose linear part is the identity map are 
translations (i.e.\@ have degree $1$) if they satisfy the Keller condition, using results that are 
described below. 
In 1954, K. J{\"o}rgens proved in \cite{MR0062326} that functions from $\R^2$ to $\R$ which are 
twice continuously differentiable and whose Hessian determinant equals one at each point are in fact 
quadratic polynomials. Four years later, this result was extended to $\R^3$ and $\R^4$ by E. Calabi in 
\cite{MR0106487}, but with the extra condition that the Hessian matrix is positive definite 
everywhere. The polynomial 
$$
f = g(x_1 + x_3) - \tfrac12 x_1^2 - \tfrac12 x_2^2 + \tfrac12 x_3^2 + \cdots + \tfrac12 x_n^2 
$$
shows that such an extra condition is required. An extension to arbitrary dimension was proved by A.V. 
Pogorelov in 1972 in \cite{MR0319126}, using a lemma of \cite{MR0106487}. In theorem \ref{definite}, we 
shall extend this result for polynomials in dimensions $n \le 3$ (and for real polynomials) as follows. 
We shall show that a polynomial in $K[x_1,x_2,\ldots,x_n]$ is a quadratic polynomial if its Hessian 
determinant is constant and its quadratic part does not vanish at $K^n \setminus \{0\}^n$,
provided $n \le 3$ (or $K = \R$). 

In 1991, F. Dillen classified all polynomials in two indeterminates over a field of characteristic
zero with constant Hessian determinant in \cite{MR1107649}, and showed that the Jacobian conjecture
holds for gradient maps in dimension two. The key point of Dillen's classification can be seen
as follows: if the degree of the polynomial is larger than two, then after a suitable linear transformation, 
the lower right corner of its Hessian matrix becomes zero.
As mentioned above, our main result (in theorem \ref{dillenext}) is a similar statement for polynomials
with constant Hessian determinant in dimension three:
if the degree of the polynomial is larger than two, then after a suitable linear transformation, 
every entry below the anti-diagonal of its Hessian matrix becomes zero. 
This result does not hold in dimensions larger than three and neither for quadratic polynomials over 
$\R$ in dimensions two and three.

Let $T \in \GL_n(K)$. Taking the Jacobian matrix of $f(Tx)$, we obtain by the chain rule that
$$
\jac f(Tx) = (\jac f)|_{x=Tx} \cdot T
$$
where $|_{x=g}$ stands for substituting $x$ by $g$.
Since the gradient vector, denoted by $\grad$, is the transpose of the Jacobian of a single 
polynomial, we obtain
\begin{equation} \label{gradT}
\grad f(Tx) = T\tp \cdot (\grad f)|_{x=Tx}
\end{equation}
where $\tp$ stands for taking the transpose.
Subsequently, we can take the Jacobian of (\ref{gradT}), which is the Hessian, denoted by $\hess$,
of $f(Tx)$, and again by the chain rule, we obtain
\begin{equation} \label{hessT}
\hess f(Tx) = T\tp \cdot (\hess f)|_{x=Tx} \cdot T
\end{equation}
Formulas \ref{gradT} and \ref{hessT} indicate the effect of a linear transformation 
on the gradient and the Hessian, respectively.

Now that we know how linear transformations influence the Hessian, we are able to see that Dillen's result 
cannot be extended to dimension four. Take for instance 
\begin{equation} \label{dillen4}
f = (x_1 + x_2^2) x_3 + (x_2 + (x_1 + x_2^2)^2) x_4
\end{equation}
Then the cubic part of $f$ is equal to $x_2^2 x_3 + x_1^2 x_4$, and the rows of its Hessian, whose
entries are linear forms, are independent over $\C$. This is maintained after a linear transformation, 
so if we could obtain by way of a transformation that
the Hessian of $f$ becomes zero below the anti-diagonal, the lower left corner of $\hess f$ would 
get a nontrivial linear part, because the last row of the Hessian of the cubic part of 
$f$ cannot become zero. This however contradicts that the Hessian determinant is a 
nonzero constant. The polynomial $f$ in (\ref{dillen4}) was made by applying the reduction
of the Jacobian conjecture to gradient maps, as in \cite[Prop.\@ 1.4]{MR2221506}, on the planar 
invertible map $F = (x_1 + x_2^2, x_2 + (x_1 + x_2^2)^2)$.

\section{Results and proof of main result} \label{first}

First, we formulate our main result. At the end of this section, we will derive the main result 
from other results in this section.

\begin{theorem}[Main result] \label{dillenext}
Let $K$ be a field of characteristic zero and $f \in K[x] = K[x_1,x_2,\ldots,x_n]$ 
be a polynomial of degree $d$. If $n \le 3$, then $\grad f$ satisfies the Jacobian conjecture.

If $n \le 3 \le d$ and $\det \hess f \in K$, then there exists a $T \in \GL_n(K)$ such that all 
entries below the anti-diagonal of the Hessian of $f(Tx)$ are zero. In particular, the quadratic 
part of $f$ vanishes somewhere at $K^n \setminus \{0\}^n$ when $2 \le n \le 3$, namely at the last 
column of $T$.
\end{theorem}

\noindent
The condition $d \ge 3$ for the existence of $T$ as claimed in theorem \ref{dillenext} is 
necessary (except for algebraically closed fields $K$).
Take for instance $f = \frac12 x_1^2 + \frac12 x_2^2 + \cdots + \frac12 x_n^2$. Then $f$ has no
nontrivial zero over $\R$, so $f$ does not vanish at the last column of any $T \in \GL_n(\R)$.
See section \ref{last} for more results about the real numbers.

Let $K$ be a field. We call $w: K[x_1,x_2,\ldots,x_n] \rightarrow \R \cup \{-\infty\}$ 
a {\em weight function} if $w(0) = -\infty$,
$$
w(x_1^{\alpha_1}x_2^{\alpha_2}\cdots x_n^{\alpha_n}) = 
\alpha_1 w(x_1) + \alpha_2 w(x_2) + \cdots + \alpha_n w(x_n) \in \R
$$ 
and, in case $g \ne 0$,
$$
w(g) = \max\{w(t)\mid \mbox{$t$ is a term of $g$ (with nonzero coefficient)}\}
$$
The {\em $w$-leading part} of a polynomial $g$ is the sum of the monomials $c_t t$ of $g$ 
(with nonzero coefficient $c_t$), for which $w(t) = w(g)$.

In order to prove our main theorem, we use the following theorem, which we prove in section
\ref{second}.

\begin{theorem} \label{weight}
Let $K$ be a field of characteristic zero and assume that $f \in K[x] = K[x_1,x_2,\ldots,x_n]$
satisfies $\det \hess f \ne 0$. If $n \le 3$, then there exist a $T \in \GL_n(K)$ and 
a weight function $0 < w(x_1) \le w(x_2) \le \cdots \le w(x_n)$, such that the Hessian determinant 
of the $w$-leading part of $f(Tx)$ is nonzero.
\end{theorem}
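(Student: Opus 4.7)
The plan is to handle $n = 1, 2, 3$ in turn, starting in each case with the uniform weight $w_0 = (1, 1, \ldots, 1)$, whose $w_0$-leading part of $f$ is just the top-degree homogeneous component $f_d$. If $\det \hess f_d \ne 0$, the theorem holds with $T = I_n$ and $w = w_0$. When $\det \hess f_d = 0$, the classification of polynomials with vanishing Hessian determinant (\cite{MR1509898} for homogeneous polynomials up to $n = 4$, and \cite{MR2095579} for arbitrary polynomials in $n \le 3$) supplies a $T \in \GL_n(K)$ after which $f_d$ depends on at most $n - 1$ of the variables. The case $n = 1$ is trivial: any $f$ with $f'' \ne 0$ has degree $\ge 2$ and its leading monomial has nonzero second derivative under any positive weight.

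For $n = 2$, the reduction forces $f_d = c x_1^d$. The hypothesis $\det \hess f \ne 0$ ensures that $f$ is not univariate in $x_1$, so the Newton polygon of $f$ has an edge from $(d, 0)$ to a vertex $(i_0, j_0)$ with $j_0 \ge 1$; because $(i_0, j_0) \notin \operatorname{supp}(f_d) = \{(d, 0)\}$, we have $i_0 + j_0 < d$, hence $r := (d - i_0)/j_0 > 1$. With $w = (1, r)$ the $w$-leading part $h$ is $w$-homogeneous of weight $d$ and contains both $x_1^d$ and a term $x_1^{i_0} x_2^{j_0}$ with $j_0 \ge 1$. I would then argue by contradiction: if $\det \hess h = 0$, the bivariate classification makes $h = P(\ell)$ for a single linear form $\ell = x_1 + \beta x_2$ (since $h$ contains $x_1^d$). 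The $w$-homogeneity of weight $d$ with $r \ne 1$ forces each binomial component of $\ell^k$ with nonzero $c_k$ in $P$ to have weight $d$, which is impossible unless $P$ is a monomial and $\beta = 0$, contradicting the presence of the $x_2$-involving term.

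For $n = 3$, the reduction yields a bivariate $f_d = g(x_1, x_2)$, and I would split according to whether $\det \hess_{2 \times 2} g$ vanishes. If it does not, consider $w = (1, 1, r)$ and let $r^*$ be the smallest $r$ at which the $w$-leading part picks up a term $(a, b, c)$ with $c \ge 1$; one checks directly that $r^* > 1$ because all such terms have total degree strictly less than $d$. The $w$-leading part at $r = r^*$ decomposes as $g + h'$, where $h'$ is a nonzero $w$-homogeneous polynomial involving $x_3$. I would then show $\det \hess(g + h') \ne 0$: if it vanished, then $g + h'$ would be a polynomial in at most two linear forms by the trivariate classification, and combining this with the $w$-homogeneity ($r^* > 1$) and the bivariate nondegeneracy of $g$ should rule out every possible such representation (possibly after a further shearing $x_i \mapsto x_i + \mu_i x_3$). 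If $\det \hess_{2 \times 2} g = 0$, then $g$ is a power of a linear form and reduces to $c x_1^d$ by a further $T$; the argument then mirrors the $n = 2$ case inside the three-dimensional Newton polytope, choosing a two-dimensional face through $(d, 0, 0)$ supported by a positive weight and again applying the classification to preclude vanishing of the face Hessian determinant.

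The main obstacle is the final sub-case, in which $f_d = c x_1^d$. Here the Newton polytope is genuinely three-dimensional, and the supporting face together with its positive weight must be chosen simultaneously compatibly with positivity, with the structural constraints imposed by $\det \hess f \ne 0$, and with the classification applied to the (typically inhomogeneous) face polynomial. This is also the step in which the bound $n \le 3$ becomes indispensable: for $n \ge 4$ no analogous classification of polynomials with vanishing Hessian determinant is available, and indeed the theorem itself fails in higher dimensions, as for instance polynomials in the style of (\ref{dillen4}) illustrate.
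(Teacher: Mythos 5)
Your overall strategy --- start from the top homogeneous form, invoke the classification of vanishing Hessian determinants (theorem \ref{zerohess}) to normalize it, and then tilt the weight until the leading part becomes nondegenerate --- is the same idea the paper uses, and your $n\le 2$ treatment is essentially sound (you should still check that the leading part has no terms of degree less than two before citing the classification; the only offending case is $h=cx_1^d+c'x_2$, which forces $f\in K[x_1]+Kx_2$ and hence $\det\hess f=0$, so it is harmless). The genuine gap is in your first $n=3$ branch, where $\det$ of the $2\times2$ Hessian of $g=f_d$ is nonzero: the claim that the $w$-leading part $g+h'$ at the minimal $r^{*}$ has nonzero Hessian determinant is simply false. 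Take $f=x_1^2x_2+x_2^2+x_1x_3$. Then $\det\hess f=-2$, $g=x_1^2x_2$ has $2\times2$ Hessian determinant $-4x_1^2\ne 0$, the only term involving $x_3$ is $x_1x_3$, so $r^{*}=2$ and $g+h'=x_1^2x_2+x_1x_3$; but this is of the second shape in ii) of theorem \ref{zerohess} (namely $g_2(x_1)x_2+g_3(x_1)x_3$ with $l_1=x_1$), and its Hessian determinant vanishes because rows $2$ and $3$ of its Hessian are $(2x_1,0,0)$ and $(1,0,0)$. No shearing $x_i\mapsto x_i+\mu_ix_3$ repairs this while keeping $f_d$ free of $x_3$. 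What is actually needed --- and what the paper does --- is to also separate $w(x_1)$ from $w(x_2)$: for this $f$ the working weight is $w=(1,2,3)$, whose leading part is all of $f$. The paper's proof iterates, increasing first $w(x_2)=w(x_3)$ and then $w(x_3)$, re-applying theorem \ref{zerohess} to the new leading part after each step; your proposal locks $w(x_1)=w(x_2)=1$, which is structurally insufficient.

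The second $n=3$ branch ($f_d=cx_1^d$) you yourself flag as ``the main obstacle'' and leave without an argument; this is precisely where the bulk of the paper's proof lives (the case distinction $r=0$, $r=1$, $r\ge 2$ on the $x_2$-degree of the current leading part, each case requiring a fresh application of theorem \ref{zerohess}, a further weight increase, and --- in the $r\ge2$ case --- the degeneracy argument via lemma \ref{dglem} to rule out that the enlarged leading part collapses to two linear forms). As it stands, the proposal establishes the theorem for $n\le 2$ only.
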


\noindent
The example below shows that the above theorem cannot be extended to dimensions larger than three.

\begin{example} \label{weightcounter}
Let $n \ge 4$ and
$$
f := x_1 x_2 + t x_1 x_2^2 + (x_2 + x_1 x_3)^3 + x_1^4 (1 + x_4) +
     (x_5^7 + \cdots + x_n^{n+2})
$$
Then $\det \hess f =  t g$, where 
$$
g = -\tfrac1{450} (n+1)!\, (n+2)!\, x_1^9(x_2 + x_1 x_3) x_5^5 \cdots x_n^n
\in \Z[x] \setminus \{0\}
$$
In particular,
$$
\det \hess (f|_{t=0}) = 0 \ne g = \det \hess (f|_{t=1})
$$
Consequently, for each 
$T \in \GL_n(\C)$ and each $(w(x_1),w(x_2),\ldots,w(x_n)) \in \R^n \setminus \{0\}^n$, the 
$w$-leading part of $f(Tx)|_{t=0}$ has Hessian determinant zero. We will show in section \ref{second} 
that the same holds for $f(Tx)|_{t=1}$, although its Hessian determinant is nonzero.
Hence the condition $n \le 3$ in theorem \ref{weight} is necessary. 

This example was inspired by formula (9) in \cite[Th.\@ 3.5]{MR2095579}, and $f|_{t=0}$ is of the form
of this formula in dimension $n = 4$.
\end{example}

\noindent
As announced, we will use results of \cite{MR2095579} in our proof. These results are used in the proof of 
theorem \ref{weight}, and are as follows.

\begin{theorem} \label{zerohess}
Let $K$ be a field of characteristic zero. Suppose that $h \in K[x] = K[x_1,x_2,\ldots,x_n]$
has no terms of degree less than two and that $\det \hess h = 0$. If $n \le 3$, then there exists a 
$T \in \GL_n(K)$ such that all entries below the anti-diagonal of the Hessian of $h(Tx)$ are zero.
\begin{enumerate}

\item[i)] If $n = 2$, then $h \in K[l_1]$ for some linear form $l_1 \in K[x]$.

\item[ii)] If $n = 3$, then either $h \in K[l_1,l_2]$ for some linear forms 
$l_1, l_2 \in K[x]$, or $h = g_1(l_1) x_1 + g_2(l_1) x_2 + g_3(l_1)x_3$ for some
linear form $l_1 \in K[x]$ and polynomials $g_1, g_2, g_3 \in K[x_1]$. 
Furthermore, the leading homogeneous part of $h$ is of the form $l_1^{\deg h - 1} l_4$ for 
some linear form $l_4 \in K[x]$ in the latter case.

\end{enumerate}
\end{theorem}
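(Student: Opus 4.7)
The plan is to reduce the classification to the theory of polynomials with algebraically dependent gradients, following the framework of \cite{MR1509898} and \cite{MR2095579}. The starting observation is the Jacobian criterion in characteristic zero: $\det\hess h=0$ if and only if $\partial_1 h,\partial_2 h,\ldots,\partial_n h$ are algebraically dependent over $K$. The classification of $h$ then reduces to analyzing the possible shapes a polynomial can take when its gradient is constrained by such a nontrivial algebraic relation.

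For $n=2$, the identity $h_{11}h_{22}=h_{12}^2$ expresses that the image of $\grad h$ lies in an algebraic curve. I would first dispose of the degenerate cases where some second partial vanishes: the identity forces the mixed partial to vanish too, the partials decouple, and the hypothesis that $h$ has no terms of degree less than two rules out any stray linear contribution, yielding $h\in K[x_1]$ or $h\in K[x_2]$. In the remaining case with $h_{11}h_{22}\ne 0$, integrating the algebraic dependence of $\partial_1 h$ and $\partial_2 h$ using the compatibility $\partial_1\partial_2 h=\partial_2\partial_1 h$ and the absence of low-degree terms produces a single linear form $l_1$ with $h\in K[l_1]$; a matrix $T\in\GL_2(K)$ whose second column spans the kernel of $l_1$ then kills the lone entry below the anti-diagonal of $\hess h(Tx)$.

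For $n=3$, I would combine the Gordan-N\"other classification for homogeneous polynomials in at most four variables (which asserts that vanishing Hessian implies expression in strictly fewer linear forms) with a rank analysis of $\hess h$. Homogenizing $h$ by an auxiliary variable and applying Gordan-N\"other produces a representation involving fewer linear forms in four variables, and dehomogenization then leads to a dichotomy: either the auxiliary variable decouples, giving $h\in K[l_1,l_2]$ after a linear change of coordinates; or it survives as an essential direction, producing the exotic form $h=g_1(l_1)x_1+g_2(l_1)x_2+g_3(l_1)x_3$. In the first subcase, choosing $T\in\GL_3(K)$ with third column in the joint kernel of $l_1,l_2$ zeros out all entries below the anti-diagonal; in the exotic subcase, choosing $T$ so that $l_1$ becomes $x_1$ achieves the same. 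The assertion about the leading homogeneous part being $l_1^{\deg h-1}l_4$ then follows by direct inspection of the top-degree contributions of the $g_i$.

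The main obstacle is cleanly establishing the $n=3$ dichotomy. Verifying in the exotic branch that the coefficients really factor as $g_i(l_1)$ with a common linear form $l_1$ across all three slots requires the careful Gordan-N\"other-style analysis of \cite{MR1509898} adapted to the non-homogeneous setting of \cite{MR2095579}. This is precisely the step where the argument breaks down in higher dimensions: Gordan-N\"other counterexamples in dimensions $n\ge 5$ produce polynomials whose gradient image has more elaborate structure, fitting neither of the prescribed shapes.
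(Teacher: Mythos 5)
The paper's own proof of this theorem is essentially a citation of \cite[Th.\@ 3.1]{MR2095579} and \cite[Th.\@ 3.3]{MR2095579} followed by bookkeeping (rewriting $h(Tx)$ in terms of $l = T^{-1}x$, using the absence of terms of degree $<2$ to absorb $a_1(l_1)$ into the form $\sum_i g_i(l_1)x_i$, and reading off the leading form). You instead try to re-derive those cited results. Your $n=2$ sketch is acceptable, and your bookkeeping matches the paper's. But your $n=3$ argument has a genuine gap at its central step: you homogenize $h$ to a quaternary form $H$ and apply Gordan--N\"other to $H$, which presupposes $\det \hess H = 0$. That does not follow from $\det \hess h = 0$. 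The correct relation is $\det \hess H\,|_{x_0=1} = (d-1)^2\det\left(\begin{smallmatrix} \frac{d}{d-1}h & (\grad h)\tp \\ \grad h & \hess h \end{smallmatrix}\right)$, i.e.\@ the \emph{bordered} Hessian of $h$, not its Hessian. A concrete counterexample is $h = x_1^2x_2 + x_1^3x_3$: this is of the exotic form $g_2(x_1)x_2 + g_3(x_1)x_3$, so $\det \hess h = 0$ (the last two rows of $\hess h$ are $(2x_1,0,0)$ and $(3x_1^2,0,0)$), yet its homogenization $H = x_0x_1^2x_2 + x_1^3x_3$ satisfies $\det \hess H = 9x_1^8 \ne 0$. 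So Gordan--N\"other simply does not apply to $H$, and the proposed dichotomy collapses at the outset.

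Even if one repaired the homogenization step, the conclusion you would extract from Gordan--N\"other --- that $H$ is a polynomial in three linear forms of $x_0,\ldots,x_3$ --- dehomogenizes to the statement that $h$ is a polynomial in three affine-linear forms of $x_1,x_2,x_3$, which is vacuous. The exotic shape $h = g_1(l_1)x_1 + g_2(l_1)x_2 + g_3(l_1)x_3$, with a \emph{single} linear form $l_1$ common to all three coefficients, is precisely the hard structural content of \cite[Th.\@ 3.3]{MR2095579}; it comes from the Gordan--N\"other analysis of the syzygy among the partial derivatives (applied directly in the inhomogeneous setting), not from the ``fewer linear forms'' conclusion. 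You correctly identify this as the main obstacle, but the route you propose does not overcome it, whereas the paper sidesteps it entirely by citing the reference where that analysis is carried out.
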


\begin{proof}
Since the claims of theorem \ref{zerohess} are void when $n = 1$, the cases $n = 2$ and $n = 3$ remain. 
Write $l = (l_1, l_2, \ldots, l_n)$.
\begin{enumerate}

\item[i)] Assume that $n = 2$. \cite[Th.\@ 3.1]{MR2095579} tells us that there exists a 
$T \in \GL_n(K)$ such that $h(Tx) \in K[x_1]$. Hence $T$ is as claimed and we can take 
$l = T^{-1} x$ to get $h \in K[l_1]$.

\item[ii)] Assume that $n = 3$. \cite[Th.\@ 3.3]{MR2095579} tells us that there exists a 
$T \in \GL_n(K)$ such that $h(Tx) \in K[x_1,x_2]$ or $h(Tx)$ is of the form 
$a_1(x_1) + a_2(x_1) x_2 + a_2(x_3) x_3$ for polynomials $a_1, a_2, a_3$. Again, $T$ is as 
claimed and we can take $l = T^{-1} x$. Then $h = a_1(l_1) + a_2(l_1) l_2 + a_3(l_1)l_3$ in case 
$h \notin K[l_1,l_2]$, which we assume from now on.

Since $h$, $a_2(l_1) l_2$ and $a_3(l_1)l_3$ have 
no constant terms, neither has $a_1(l_1)$, and we see that $l_1 \mid a_1(l_1)$. Consequently,
$h = b_1(l_1)l_1 + b_2(l_1) l_2 + b_3(l_1)l_3$ for polynomials $b_1, b_2, b_3$.
But each of the linear forms $l_1, l_2, l_3$ is a linear combination of $x_1, x_2, x_3$, 
whence $h = g_1(l_1) x_1 + g_2(l_1) x_2 + g_3(l_1)x_3$ for polynomials $g_1, g_2, g_3$.

If $c_1, c_2, c_3$ are the coefficients of degree $\deg h - 1$ of $g_1, g_2, g_3$, respectively,
then the leading homogeneous part of $h$ is equal to $l_1^{\deg h-1} (c_1 x_1 + c_2 x_2 + c_3 x_3)$,
which is as claimed with $l_4 = c_1 x_1 + c_2 x_2 + c_3 x_3$. \qedhere

\end{enumerate}
\end{proof}

\begin{proof}[Proof of theorem \ref{dillenext}] 
There is nothing to be proved when $\det \hess f \notin K$, so let $\det \hess f \in K$. 
Notice that the Jacobian conjecture is trivially satisfied when $n = 1$ or $d \le 2$. 
Since there is nothing additionally to be proved in both cases, we may assume that 
$2 \le n \le 3 \le d$.
\begin{enumerate}

\item[i)]
We first show that a $T$ as given exists. If $\det \hess f = 0$, then the existence of $T$ follows
from theorem \ref{zerohess}. Hence suppose that $\det \hess f \in K^{*}$. By theorem \ref{weight},
there exist a $T \in \GL_n(K)$ and a weight function $0 < w(x_1) \le w(x_2) \le \cdots \le w(x_n)$ 
such that the Hessian determinant of the $w$-leading part of $f(Tx)$ is nonzero. Since 
$\det \hess f \in K^{*}$, the Hessian determinant of the $w$-leading part of $f(Tx)$ is a nonzero
constant as well. On the other hand, all terms of this determinant have weight $n w(f(Tx)) - 
2w(x_1 x_2 \cdots x_n)$, so 
\begin{equation} \label{wdegeq}
n w(f(Tx)) - 2w(x_1 x_2 \cdots x_n) = 0
\end{equation}

By $d \ge 3$, we have $(\hess f(Tx))_{ij} \notin K$ for some $i,j$. 
On account of $0 < w(x_1) \le w(x_2) \le \cdots \le w(x_n)$ and \eqref{wdegeq}, we have 
$$
w\big((\hess f(Tx))_{nn}^{n-1} (\hess f(Tx))_{ij}\big) 
\le n w(f(Tx)) - w(x_i x_j x_n^{n-1} x_n^{n-1}) \le 0
$$
so $(\hess f(Tx))_{nn}^{n-1} = 0$ and $T$ is as claimed when $n = 2$. If $n = 3$, then we have
$$
w\big((\hess f(Tx))_{32}^{n-1} (\hess f(Tx))_{ij}\big) 
\le n w(f(Tx)) - w(x_i x_j x_2^{n-1} x_3^{n-1}) \le 0
$$
so $(\hess f(Tx))_{32}^{n-1} = 0$ as well and $T$ is as claimed when $n = 3$, too.

\item[ii)]
We next show that the quadratic part of $f$ vanishes on the 
last column $Te_n$ of $T$. On account of $n \ge 2$, the lower right corner entry
of $\hess f(Tx)$ vanishes. Hence $f(Tx)$ has no term
which is divisible by $x_n^2$. So every quadratic term of $f(Tx)$ vanishes at the $n$-th
standard basis unit vector $e_n$, and the quadratic part of $f$ vanishes at the last column $T e_n$
of $T$.

\item[iii)]
At last, we show that $\grad f$ satisfies the Jacobian conjecture. 
So assume that $\det \hess f \in K^{*}$. By \eqref{gradT} and $\det T\tp = \det T \ne 0$, 
it suffices to show that $F := \grad f(Tx)$ is invertible. Since $\hess (f(Tx)) = \jac F$,
the invertibility of $F$ follows from lemma \ref{antitri} below. \qedhere

\end{enumerate} 
\end{proof}

\begin{lemma} \label{antitri}
Suppose that $A$ is a commutative $\Q$-algebra and $F \in A[x]^n = A[x_1,x_2,\ldots,x_n]^n$. If all entries
below the anti-diagonal of $\jac F$ are zero and $\det \jac F \in A^{*}$, then $F$ is invertible.
\end{lemma}

\begin{proof}
Since the entries below the anti-diagonal
of $\jac F$ are zero, we see that $F_{n+1-i} \in A[x_1, x_2, \ldots, x_i]$
for each $i$. Hence it follows from $\det \jac F \in A^{*}$, that the entries
on the anti-diagonal of $\jac F$ are nonzero constants. Say that these constants are
$c_1, c_2, \ldots, \allowbreak c_n$ from left to right. Then $F_n - c_1 x_1 \in A$ and 
$F_{n+1-i} - c_i x_i \in A[x_1, x_2, \ldots, x_{i-1}]$ for all $i \ge 2$. By induction on $i$, 
we obtain that $x_i \in A[F_{n+1-i},F_{n+2-i},\ldots,F_n]$ for each $i$, 
so $F$ is invertible. \qedhere
\end{proof}

\noindent
We showed earlier that $f$ in \eqref{dillen4} is a counterexample to theorem \ref{dillenext} with 
$n = 4$. Using some techniques in the above proof, one can show that $f$ is a counterexample
to theorem \ref{weight} with $n = 4$ as well. For that purpose, take any $T \in \GL_4(K)$
and any weight function $w$ such that $0 < w(x_1) \le w(x_2) \le w(x_3) \le w(x_4)$.  

Since the rows of the Hessian of the cubic part of $f$ are independent, there exists
a $j$ such that $(\hess f(Tx))_{nj} \notin K$. Furthermore, $\det \hess f(Tx) \ne 0$
tells us that there exists an $i \ge 3$ and a $k \ge 2$ such that $(\hess f(Tx))_{ik} \ne 0$.
From $0 < w(x_1) \le w(x_2) \le w(x_3) \le w(x_4)$, we deduce that 
$n w(f(Tx)) - 2 w(x_1x_2x_3x_4) \ge n w(f(Tx)) - 2 w(x_jx_kx_ix_n) \ge 
w\big((\hess f(Tx))^2_{nj}(\hess f(Tx))^2_{ik}\big) > 0$, which contradicts \eqref{wdegeq}.

\section{Proofs of theorem \ref{weight} and example \ref{weightcounter}} \label{second}

Let us start with a simple case of theorem \ref{weight}.

\begin{proof}[Proof of the case $n=2$ of theorem \ref{weight}.]
If the leading homogeneous part $\bar{f}$ of $f$ satisfies $\det \hess \bar{f} \ne 0$,
then we can take $w(x_1) = w(x_2) = 1$ and $T = I_2$, where $I_2$ is the identity 
matrix of size $2$. So assume
that $\det \hess \bar{f} = 0$. 

From i) of theorem \ref{zerohess}, it follows that there
exists a $T \in \GL_2(K)$ such that $\bar{f}(Tx) \in K[x_1]$. Since $\bar{f}(Tx)$ is 
homogeneous, we see that its only term is $x_1^d$, where $d = \deg f(Tx)$. 
By \eqref{hessT} on page \pageref{hessT}, we deduce from $\det \hess f \ne 0$ that 
$\det \hess (f(Tx)) \ne 0$. Consequently, $f(Tx)$ has a nonlinear term 
which is divisible by $x_2$ besides the term $x_1^d$.

Take $w(x_1) = 1$. If we take $w(x_2) = 1$, then the $w$-leading part $h$ of 
$f(Tx)$ will have the term $x_1^d$ of $f(Tx)$, but if we take $w(x_2) \ge 1$ large
enough, then $h$ will not have the term $x_1^d$ any more, because $f \notin K[x_1]$.

Now take $w(x_2) \ge 1$ as large as possible, such that $h$ still has the term $x_1^d$ of $f(Tx)$. 
Since $h$ will lose the term $x_1^d$ as soon as $w(x_2)$ increases only a little further,
$h$ must have another term; a term $t$ of which the weight will get larger than that of 
$x_1^d$ as soon as we start increasing $w(x_2)$ any further. 

From $w(t) = w(x_1^d)$ and
$\deg t < d$, it follows that $x_2 \mid t$ and $w(x_2) > w(x_1)$. 
Since $t$ is a term of $f(Tx)$ for which $w(t)$ is maximum 
and $f(Tx)$ has a nonlinear term which is divisible by $x_2$, we deduce that $t$ is not linear 
and that $h$ has no linear terms.

It remains to show that $\det \hess h \ne 0$.
From i) of theorem \ref{zerohess}, it follows that it suffices to show that $h$ cannot
be expressed as a polynomial in one linear form in $x_1$ and $x_2$. Hence suppose that
$h$ can indeed be expressed as a polynomial in one linear form. Since the leading 
homogeneous part of $h$ is a scalar multiple of $x_1^d$, we can take $x_1$ for this linear form. 
But $t \notin K[x_1]$. Contradiction, so $\det \hess h \ne 0$ indeed.
\end{proof}

The case $n = 3$ of theorem \ref{weight} is more complicated. Let us first give a situation where
we can do similar things as in the proof of the case $n=2$ of theorem \ref{weight}, 
to obtain that theorem \ref{weight} holds in that situation.

\begin{lemma} \label{weight2}
Let $n = 3$ and assume that $f \in K[x]$ satisfies $\det \hess f \ne 0$. 
Let $T \in \GL_3(K)$ and let $w$ be a weight function for which 
$0 < w(x_1) \le w(x_2) \le w(x_3)$. 

Suppose that the $w$-leading part $h$ of $f(Tx)$ is contained in $K[x_1,x_2]$, but is
not of the form $g_1(l_3) x_1 + g_2(l_3) x_2$ for any linear form $l_3 \in K[x_1,x_2]$
and any polynomials $g_1, g_2 \in K[x_1]$. 

Then theorem \ref{weight} holds for $f$. More precisely, we only need to adjust $w(x_3)$
by increasing it a certain amount in order to get $T$ and $w$ as in theorem \ref{weight} 
for this particular $f$.
\end{lemma}

\begin{proof}
Take $w'(x_1) = w(x_1)$ and $w'(x_2) = w(x_2)$. By \eqref{hessT} on page \pageref{hessT}, 
we deduce from $\det \hess f \ne 0$ that $\det \hess (f(Tx)) \ne 0$. Hence $f(Tx)$ has a nonlinear term
which is divisible by $x_3$. If we take $w'(x_3) = w(x_3)$, then the $w'$-leading part $h'$ of $f(Tx)$ 
will be just $h \in K[x_1,x_2]$. But if we take $w'(x_3)$ large enough, then $h'$ will not have
any term of $h \in K[x_1,x_2]$, because the value of $w'$ at any term which is divisible by $x_3$ will be 
larger than $w'(h)$. 

Now take $w'(x_3) \ge w(x_3)$ as large as possible, such that $h'$ still shares a 
term with $h$. 
From $w'(x_1) = w(x_1)$, $w'(x_2) = w(x_2)$, and $h \in K[x_1,x_2]$, we deduce
that all terms of $h$ still have the same weight. Hence $h'$ contains all terms of $h$
(with the same coefficients)
and $w'(h') = w(h)$. Furthermore, $w'(t) = w(t)$ for any term $t$ of $h'$ which is not 
divisible by $x_3$. Hence any such $t$ is also a term of $h$ because of $w'(h') = w(h)$. So
$x_3 \mid h' - h$.

Since $h'$ will lose the terms of $h$ as soon as $w'(x_3)$ increases only a little further,
$h'$ must have a term that $h$ does not have: a term $t'$ such that $w'(t')$ will get larger 
than $w'(h')$ as soon as we start increasing $w'(x_3)$ any further. Just like for any other
term of $h' - h$, we have $x_3 \mid t'$. Since $t'$ is a term of $f(Tx)$ for which $w'(t')$ is maximum 
and $f(Tx)$ has a nonlinear term which is divisible by $x_3$, we deduce that $t'$ is not linear 
and that $h'$ has no linear terms. Furthermore, we can deduce from the existence of $t'$ that 
$w' \ne w$, so $w'(x_3) > w(x_3)$.

So it remains to show that $\det \hess h' \ne 0$. From ii) of theorem \ref{zerohess}, it follows 
that it suffices to show that the following cases do not occur.
\begin{itemize}

\item \emph{$h'$ is of the form $g_1(l_1) x_1 + g_2(l_1) x_2 + g_3(l_1)x_3$ for some
linear form $l_1 \in K[x]$ and some polynomials $g_1,g_2,g_3 \in K[x_1]$.} \\
Since $h'$ contains every term of $h \in K[x_1,x_2]$ and $x_3 \mid h'-h$, we deduce that
$h'|_{x_3=0} = h$. Hence $h = g_1(l_3) x_1 + g_2(l_3) x_2$ for some linear form $l_3 \in 
K[x_1,x_2]$. But we assumed that $h$ does not have this form.

\item \emph{$h' \in K[l_1,l_2]$ for certain linear forms $l_1, l_2 \in K[x]$ and
$w(x_1) = w(x_2)$.} \\
Since $w(x_1) = w(x_2) \le w(x_3)$ and $h \in K[x_1,x_2]$, we see that $h$ is the
leading homogeneous part of $f(Tx)$. As $h'$ contains all terms of $h$, $h$ is also 
the leading homogeneous part of $h'$, and just like for $h'$, we have $h \in K[l_1,l_2]$.
There exists a linear combination of $l_1$ and $l_2$ which does not have $x_3$ as a term,
so we may assume that $l_1 \in K[x_1,x_2]$.
Since $h$ is not of the form $g_1(l_3) x_1 + g_2(l_3) x_2$ for any linear form 
$l_3 \in K[x_1,x_2]$ and any polynomials $g_1, g_2 \in K[x_1]$, we can deduce that 
$l_2 \in K[x_1,x_2]$ as well. This contradicts $x_3 \mid t'$.

\item \emph{$h' \in K[l_1,l_2]$ for certain linear forms $l_1, l_2 \in K[x]$ and
$w(x_1) < w(x_2)$.} \\
We first show that $h \notin K[x_1]$ and that the leading homogeneous part $\bar{h}'$ of $h'$ 
is divisible by $x_1$. Since $h$ is not of the form $g_1(l_3) x_1 + g_2(l_3) x_2$ for any linear form 
$l_3 \in K[x_1,x_2]$, we see that $h \notin K[x_1]$ indeed and that $h \notin K[x_2]$. 
So $h \in K[x_1,x_2]$ has a term $t \in K[x_1,x_2]$ which is divisible by $x_1$. 
From $w'(x_1) = w(x_1)$, $w'(x_2) = w(x_2)$ and $w'(x_3) > w(x_3)$, it follows that 
$w'(x_1) < w'(x_2) < w'(x_3)$. Hence for every term $u \in K[x_2,x_3]$ with the same
degree as $t$, we have $w'(u) > w'(t)$. Since the term $t$ of $h$ is also a term of $h'$,
$x_1 \mid \bar{h}'$ indeed.

Since $x_1 \mid \bar{h}'$, it follows from lemma \ref{xkdiv} below that 
$h' \in K[x_1,l_3]$ for some linear form $l_3 \in K[x_2,x_3]$. From
$h'|_{x_3=0} = h \notin K[x_1]$, we deduce that
$h' \notin K[x_1,x_3]$. So $l_3$ has a term which is divisible by $x_2$. 
$l_3$ has a term which is divisible by $x_3$ as well,
because $x_3 \mid t'$.
As terms of the expansion of $x_1^s l_3^r$, $x_1^s x_2^r$ and $x_1^s x_3^r$ are both terms of $h'$
for some $s \ge 0$ and some $r \ge 1$, because $h' \notin K[x_1]$. 
This contradicts $w'(x_2) < w'(x_3)$. \qedhere

\end{itemize}
\end{proof}

\begin{lemma} \label{xkdiv}
Suppose that $h \in K[l_1,l_2]$ for linear forms $l_1, l_2 \in K[x]$.
If the leading homogeneous part of $h$ is divisible by $x_1$, then $h \in K[x_1,l_3]$
for some linear form $l_3 \in K[x_2,x_3]$.
\end{lemma}

\begin{proof}
The leading homogeneous part $\bar{h}$ of $h$ is contained in $k[l_1,l_2]$ as well.
Take $p \in K[x_1,x_2]$ such that $\bar{h} = p(l_1,l_2)$, and let $l_3 = l_1|_{x_1=0}$
and $l_4 = l_2|_{x_1=0}$. From $x_1 \mid \bar{h} = p(l_1,l_2)$, we can deduce that
$p(l_3,l_4) = 0$. So $l_3$ and $l_4$ are algebraically dependent over $K$.
Since $l_3$ and $l_4$ are linear, they are even linearly dependent over $K$.
So $h \in K[x_1,l_3]$ if $l_3 \ne 0$ and $h \in K[x_1,l_4]$ if $l_3 = 0$.
\end{proof}

Having lemma \ref{weight2}, the strategy will be to reduce to lemma \ref{weight2}, under
the assumption that theorem \ref{weight} does not hold, to obtain a contradiction. To do that,
we use lemma \ref{weight1} below, after which we prove the case $n=3$ of theorem \ref{weight}
under the assumption that lemma \ref{weight1} is satisfied. Finally, we will prove lemma 
\ref{weight1}.

\begin{lemma} \label{weight1}
Let $n = 3$ and assume that $f \in K[x]$ satisfies $\det \hess f \ne 0$. Suppose that
the leading homogeneous part of $f(Tx)$ is of
the form $x_1^{d-1} l_4$ for some $T \in \GL_3(K)$ and some linear form $l_4 \in K[x]$.

Then there exist a $T^{*} \in \GL_n(K)$ and a weight function $w$, such that 
$0 < w(x_1) < w(x_2) = w(x_3)$ and such that the following holds for the
$w$-leading part $h^{*}$ of $f(T^{*}x)$.
\begin{enumerate}

\item[a)] $h^{*}$ is not of the form $g_1(l_3) x_1 + g_2(l_3) x_2 + g_3(l_3)x_3$ for any
linear form $l_3 \in K[x]$ and any polynomials $g_1, g_2, g_3 \in K[x_1]$.

\item[b)] $\det \hess h^{*} \ne 0$ or $h^{*} \in K[x_1,x_2]$.

\end{enumerate}
\end{lemma}

\begin{proof}[Proof of the case $n=3$ of theorem \ref{weight}.]
If the leading homogeneous part $\bar{f}$ of $f$ satisfies $\det \hess \bar{f} \ne 0$,
then we can take $w(x_1) = w(x_2) = w(x_3) = 1$ and $T = I_3$, where $I_3$ is the identity 
matrix of size $3$.

So assume that $\det \hess \bar{f} = 0$. On account of ii) of theorem \ref{zerohess},
$\bar{f} \in K[l_1,l_2]$ for some linear forms $l_1, l_2 \in K[x]$. We can choose
$l_1$ and $l_2$ independent of each other, and such that $l_1$ divides $\bar{f}$ at 
least as many times as any other linear combination of $l_1$ and $l_2$ does.

Take $T \in \GL_3(K)$ such that $l_1(Tx) = x_1$ and $l_2(Tx) = x_2$. Then the leading
homogeneous part $\bar{f}'$ of $f' := f(Tx)$ is contained in $K[x_1,x_2]$, and
$x_1$ divides $f'$ at least as many times as any other linear form in $K[x_1,x_2]$ does.
Let $d = \deg f'$. We distinguish two cases.
\begin{itemize}

\item $x_1^{d-1} \nmid \bar{f}'$. \\
Since $x_1$ divides $\bar{f}'$ at least as many times as any other linear form in $K[x_1,x_2]$ does,
we deduce that $\bar{f}'$ is not of the form $l_3^{d-1} l_4$ for any linear forms $l_3, l_4 \in K[x]$.
On account of ii) of theorem \ref{zerohess}, $\bar{f}'$ is not of the form 
$g_1(l_3) x_1 + g_2(l_3) x_2$ for any linear form $l_3 \in K[x_1,x_2]$
and any polynomials $g_1, g_2 \in K[x_1]$. 

Since $\bar{f}' \in K[x_1,x_2]$, we can apply lemma \ref{weight2}
with weight function $w$ such that $w(x_1) = w(x_2) = w(x_3) = 1$, to obtain
that the claim of theorem \ref{weight} is satisfied for this particular $f$, with $T$ as a above
and a weight function $w$ such that $1 = w(x_1) = w(x_2) < w(x_3)$

\item $x_1^{d-1} \mid \bar{f}'$. \\
Take $T^{*}$, $w$ and $h = h^{*}$ as in lemma \ref{weight1}. If $\det \hess h \ne 0$, then theorem
\ref{weight} is satisfied for this particular $f$, so assume that $\det \hess h = 0$. On account of
b) of lemma \ref{weight1}, $h \in K[x_1,x_2]$. Furthermore, $h$ is not of the form 
$g_1(l_3) x_1 + g_2(l_3) x_2$ for any linear form $l_3 \in K[x_1,x_2]$ and any polynomials 
$g_1, g_2 \in K[x_1]$ because of a) of lemma \ref{weight1}.

It follows from lemma \ref{weight2} that there
exists a weight function $w'$ such that $w'(x_1) = w(x_1)$, $w'(x_2) = w(x_2)$ and
$w'(x_3) > w(x_3)$, and such that the $w'$-leading part $h'$ of $f(T^{*}x)$ satisfies 
$\det \hess h' \ne 0$. So theorem \ref{weight} is satisfied for this particular $f$. \qedhere

\end{itemize}
\end{proof}

\begin{proof}[Proof of lemma \ref{weight1}.]
On account of \eqref{hessT} on page \pageref{hessT}, $\det \hess (f(Tx)) \ne 0$ because $\det \hess f \ne 0$.
Since $\det \hess (f(Tx)) \ne 0$, the trailing principal minor matrix of size $2$ of $\hess (f(Tx))$
is not the zero matrix. It follows that $f(Tx)$ has a term which is divisible by 
$x_2^2$, $x_2x_3$, or $x_3^2$.

Take $w(x_2) = w(x_3) = 1$. If $w(x_1) = 1$ as well, then the $w$-leading part $h$ of $f(Tx)$
is just the leading homogeneous part of $f(Tx)$, which is $x_1^{d-1} l_4$ and hence does not 
have a term which is divisible by $x_2^2$, $x_2x_3$ or $x_3^2$. If $w(x_1) > 0$ is small enough, 
then the value of $w$ at terms which are divisible by $x_2^2$, $x_2x_3$ or $x_3^2$ will be
larger than that of terms which are not. 

Now take $w(x_1) \le 1$ as large as possible, such that 
$h$ has a term which is divisible by $x_2^2$, $x_2x_3$ or $x_3^2$. 
Then the part $h'$ of $h$,
consisting of the monomials of $h$ which are divisible by $x_2^2$, $x_2x_3$ or $x_3^2$, is nonzero.
Furthermore, $0 < w(x_1) < 1$.
\begin{enumerate}

\item[a)] For the moment, we show the claim of a) for $h$ instead of $h^{*}$.
Since $h$ will lose all terms of $h'$ as soon as $w(x_1)$ increases only a little, 
$h$ must have a term at which the value of $w$ will get larger than that $w(h')$, 
as soon as we start increasing $w(x_1)$. In particular, $h - h' \ne 0$. Now let $t$ and $t'$
be arbitrary terms of $h - h'$ and $h'$, respectively. From $w(t) = w(t')$ and 
$0 < w(x_1) < w(x_2) = w(x_3)$, we can deduce that $\deg t > \deg t'$ and that $x_1^2 \mid t$.
Consequently, the leading homogeneous part $\bar{h}$ of $h$ is divisible by $x_1^2$.

Suppose that $h$ is of the form $g_1(l_3) x_1 + g_2(l_3) x_2 + g_3(l_3)x_3$ for some
linear form $l_3 \in K[x]$ and some polynomials $g_1, g_2, g_3 \in K[x_1]$. Since
$x_1^2 \mid \bar{h}$, it follows from ii) of theorem \ref{zerohess} that we can take
$l_3 = x_1$. This contradicts $h' \ne 0$, so $h$ is not of the form 
$g_1(l_3) x_1 + g_2(l_3) x_2 + g_3(l_3)x_3$ for any linear form $l_3 \in K[x]$ and any
polynomials $g_1, g_2, g_3 \in K[x_1]$.

\item[b)] If $\det \hess h \ne 0$, then b) is satisfied and we can take $h^{*} = h$ and 
$T^{*} = T$ to fulfill a). So assume that $\det \hess h = 0$. Since $h$ is not of the form 
$g_1(l_3) x_1 + g_2(l_3) x_2 + g_3(l_3)x_3$ for any linear form $l_3 \in K[x]$ and any
polynomials $g_1, g_2, g_3 \in K[x_1]$, it follows from ii) of theorem \ref{zerohess}
that $h \in K[l_1,l_2]$ for some linear forms $l_1, l_2 \in K[x]$. By lemma \ref{xkdiv},
we deduce from $x_1^2 \mid \bar{h}$ that we may assume that $l_1 = x_1$ and 
$l_2 \in K[x_2,x_3]$. Furthermore, we can take $l_2$ nonzero. 

So there exists a linear form $l_3 \in K[x_2,x_3]$ such that $l_1, l_2, l_3$ are 
independent. Take $T^{*} \in \GL_3(K)$ such that $l_1(T^{-1}T^{*}x) = x_1$, 
$l_2(T^{-1}T^{*}x) = x_2$ and $l_3(T^{-1}T^{*}x) = x_3$, and let $h^{*} = h(T^{-1}T^{*}x)$. 
Then $h^{*} \in K[x_1,x_2]$ and just like $h$, $h^{*}$ is not of 
the form $g_1(l_5) x_1 + g_2(l_5) x_2 + g_3(l_5)x_3$ for any linear form $l_5 \in K[x]$ 
and any polynomials $g_1, g_2, g_3 \in K[x_1]$. So $h^{*}$ satisfies both a) and b).

Hence it suffices to show that $h^{*}$ is the $w$-leading part of $f(T^{*}x)$.
Since $l_2$ and $l_3$ are linear forms in $x_2$ and $x_3$ and $l_1 = x_1$,
it follows from $w(x_2) = w(x_3)$ that for every term $u$ of $K[x]$,
the value of $w$ at any term of $u(T^{-1}T^{*}x)$ is equal to $w(u)$.
From this, we can deduce that $h^{*}$ is the $w$-leading part of $f(T^{*}x)$, just
like $h$ is the $w$-leading part of $f(Tx)$. \qedhere

\end{enumerate}
\end{proof}

\begin{proof}[Proof of example \ref{weightcounter}.]
Take $T \in \GL_n(\C)$ and define $l_i = T_i x$ for each $i \le n$. Then
$$
f(Tx) = l_1 l_2 + t l_1 l_2^2 + (l_2 + l_1 l_3)^3 + l_1^4 (1 + l_4) +
        (l_5^7 + \cdots + l_n^{n+2})
$$
Let $w$ be a weight function, such that the Hessian determinant of the $w$-leading part of 
$f(Tx)|_{t=1}$ is nonzero. We shall show that $w(x_i) = 0$ for all $i \le n$. 

If $w(l_1 l_2^2) < w\big(f(Tx)|_{t=1}\big)$, then the $w$-leading part of $f(Tx)|_{t=1}$
is the same as that of $f(Tx)|_{t=0}$, which contradicts that its Hessian determinant is
nonzero. Thus
\begin{equation} \label{x1x2x2}
w (l_1 l_2^2) \ge w \big(f(Tx)|_{t=1}\big)
\end{equation}
We distinguish five cases.
\begin{itemize}
 
\item $0 > w (l_2)$. \\
Then $w(l_1 l_2) > w(l_1 l_2^2)$. Since $l_1 l_2$ is the quadratic part of 
$f(Tx)|_{t=1}$, we have a contradiction with (\ref{x1x2x2}).

\item $0 \le w(l_2) > w(l_1)$. \\
Then $w(l_2^3) > w(l_1 l_2^2)$. Since $l_1 l_2^2 + l_2^3$ is the cubic 
part of $f(Tx)|_{t=1}$, we have a contradiction with (\ref{x1x2x2}).

\item $0 \le w(l_2) \le w(l_1)$ and $w(l_3) > 0$. \\
Then $w(l_1^3l_3^3) > w(l_1 l_2^2)$. Since $l_1^3 l_3^3$ is the 
part of degree six of $f(Tx)|_{t=1}$, we have a contradiction with (\ref{x1x2x2}).

\item $0 \le w(l_2) \le w(l_1) > w(l_3) \le 0$. \\
Since $\tilde{f} := f(Tx)|_{t=1} - \big((l_2 + l_1 l_3)^3 - l_2^3\big) 
\in \C[l_1,l_2,l_4,l_5,\ldots,l_n]$, it follows that $\tilde{f}(T^{-1}x) \in 
\C[x_1,x_2,x_4,x_5,\ldots,x_n]$. Hence $\det \hess \tilde{f} = 0$ 
on account of \eqref{hessT} on page \pageref{hessT}. Furthermore,
$l_1^4$ is the part of degree four of $\tilde{f}$, and
$$
w(l_1^4) > w\big(3l_1 l_2^2 l_3 + 3 l_1^2 l_2 l_3^2 + l_1^3 l_3^3\big) 
               = w\big((l_2 + l_1 l_3)^3 - l_2^3\big)
$$
So the $w$-leading parts of $\tilde{f}$ and $f(Tx)|_{t=1}$ are equal, and
their Hessian determinants are zero because $\det \hess \tilde{f} = 0$. 
Contradiction.

\item $0 \le w(l_2) \le w(l_1) \le w(l_3) \le 0$. \\
Then $w(l_i) = 0$ for each $i \le 3$. If $w(l_4) > 0$, then 
$w(l_1^4 l_4) > w\big((l_2 + l_1 l_3)^3 - l_2^3\big)$ and just as in the case above,
we get a contradiction because the $w$-leading parts of $\tilde{f}$ and $f(Tx)|_{t=1}$ are 
equal and $\det \hess \tilde{f} = 0$. Thus $w(l_4) \le 0$ and similarly, 
$w(l_i) \le 0$ for all $i \ge 5$.

Thus $w(l_i) \le 0$ for all $i$, and consequently $w(x_i) \le 0$ for all $i$ as well.
Since $l_1 l_2$ is the quadratic part of $f(Tx)|_{t=1}$ and $w(l_1 l_2) = 0$, we have
$w(f(Tx)|_{t=1}) = 0$ as well. Thus if there exists an $i$ such that $w(x_i) < 0$, 
then we do not have $x_i$ in the $w$-leading part of $f(Tx)|_{t=1}$. So $w(x_i) = 0$ 
for all $i$, as desired. \qedhere

\end{itemize}
\end{proof}

\section{Anisotropic polynomials} \label{last}

The last claim of the main theorem, theorem \ref{dillenext}, is that the quadratic part
of $f$ is so-called {\em isotropic over $K$} in case $2 \le n \le 3 \le d$. The opposite of isotropic 
is anisotropic. Below, the definition of anisotropic is generalized somewhat.

\begin{definition}
Let $K$ be a field of characteristic zero and $f \in K[x] = K[x_1,x_2,\ldots,x_n]$. 
We say that $f$ is {\em anisotropic over $K$ at $\lambda \in K^n$} if the quadratic part
of $f(x+\lambda)$ is anisotropic over $K$, i.e.\@ does not vanish anywhere at $K^n \setminus \{0\}^n$, 
or equivalently, $\mu\tp H \mu \ne 0$ for all $\mu \in K^n \setminus \{0\}^n$, where
\begin{equation} \label{aneq}
H = \big(\hess(f|_{x=x+\lambda})\big)\big|_{x=0} 
  = \big((\hess f)|_{x=x+\lambda}\big)\big|_{x=0} = (\hess f)|_{x=\lambda}
\end{equation}
\end{definition}

\noindent
In the following theorem, the cases $n \le 3$ and $K = \R$ are distinguished. The first case
follows from our techniques, while the second case follows from the result by Pogorelov 
in \cite{MR0319126}, which was mentioned in the introduction. Let $\GO_n(K)$ denote the group
of orthogonal matrices of size $n \times n$ over $K$.

\begin{theorem} \label{definite}
Let $K$ be a field of characteristic zero and $f \in K[x] = K[x_1,x_2,\allowbreak \ldots,x_n]$
such that $\det \hess f \in K^{*}$ and $f$ is anisotropic over $K$ at $\lambda$ for
some $\lambda \in K^n$. If $n \le 3$ or $K = \R$, then $\deg f = 2$.
\end{theorem}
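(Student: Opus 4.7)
The plan is to reduce each case to an already-established result, exploiting the translation invariance of the hypotheses. Specifically, replacing $f$ by $f(x+\lambda)$ preserves both $\deg f$ and $\det \hess f$ while moving the anisotropy point to the origin, so I will assume from the outset that $\lambda = 0$, i.e.\ the quadratic part of $f$ itself is anisotropic over $K$.

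For the case $n \le 3$: the subcases $n = 0$ and $n = 1$ are immediate (for $n = 1$, $\det \hess f = f'' \in K^{*}$ already forces $f$ to be a quadratic polynomial). For $n \in \{2,3\}$, I proceed by contradiction: if $d = \deg f \ge 3$, then theorem \ref{dillenext}(ii) applies and guarantees that the quadratic part of $f$ vanishes somewhere in $K^n \setminus \{0\}^n$, directly contradicting anisotropy. Together with $d \ge 2$ (forced by $\det \hess f \ne 0$), this yields $d = 2$.

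For the case $K = \R$: anisotropy over $\R$ of the quadratic form $\mu \mapsto \mu\tp H \mu$, with $H = (\hess f)|_{0}$, is equivalent to $H$ being either positive definite or negative definite; after replacing $f$ by $-f$ if necessary (which multiplies the Hessian determinant by $(-1)^n$, still a nonzero real constant), I may assume $H$ is positive definite. I then need to propagate this definiteness to all of $\R^n$: the eigenvalues of $\hess f(x)$ are continuous real-valued functions of $x$, all of them positive at $x = 0$, and since $\det \hess f$ is a nonzero constant, no eigenvalue can vanish anywhere; by the connectedness of $\R^n$, they must remain positive everywhere. Hence $\hess f$ is globally positive definite. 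After rescaling $f$ by the constant $(\det \hess f)^{-1/n}$ to normalize $\det \hess f \equiv 1$, the theorem of Pogorelov \cite{MR0319126} mentioned in the introduction applies to conclude that $f$ is a quadratic polynomial, and scaling does not alter the degree.

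The main obstacle is the propagation-of-definiteness step in the real case: pushing pointwise definiteness at $\lambda$ to global definiteness on $\R^n$ under the constant-nonzero-Hessian-determinant hypothesis. This hinges on continuity of eigenvalues of a real symmetric matrix depending continuously on $x$, on the connectedness of $\R^n$, and on the fact that a vanishing eigenvalue would force $\det \hess f = 0$. Once this is in place and the determinant is normalized to $1$ by scaling, Pogorelov's theorem closes the argument. By comparison, the $n \le 3$ case reduces almost immediately to theorem \ref{dillenext}(ii), with the only content being the translation reduction.
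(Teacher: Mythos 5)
Your proposal is correct and follows essentially the same route as the paper: the case $n \le 3$ is just the contrapositive of Theorem \ref{dillenext}(ii) after translating $\lambda$ to the origin, and the case $K = \R$ propagates positive definiteness of $\hess f$ from $\lambda$ to all of $\R^n$ using the constant nonzero determinant and then invokes Pogorelov. The only cosmetic difference is that you prove the sign-propagation step directly via continuity of eigenvalues and connectedness of $\R^n$, where the paper cites a reference for the same fact.
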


\begin{proof}
By assumption, the quadratic part of $f(x+\lambda)$ does not vanish anywhere at $K^n 
\setminus \{0\}^n$. Hence $\deg f = \deg f(x + \lambda) = 2$ on account the last claim 
of theorem \ref{dillenext} in case $n \le 3$. So assume that $K = \R$. 

Take $\nu \in \R^n \setminus \{0\}^n$ arbitrary. Then there exists a $T_{\nu} \in \GO_n(\R)$ such 
that 
$$
T_{\nu}\tp (\hess f)|_{x=\nu} T_{\nu} = T_{\nu}^{-1} (\hess f)|_{x=\nu} T_{\nu}
$$
is diagonal, see e.g.\@ \cite[Cor.\@ 3.3.1]{MR1923507}. 
Hence all eigenvalues of $(\hess f)|_{x=\nu}$, which are the same as those of
$T_{\nu}\tp (\hess f)|_{x=\nu} T_{\nu}$, are real. Suppose that the eigenvalues of 
$(\hess f)|_{x=\lambda}$ do not have all the same sign. Then 
$T_{\lambda}\tp (\hess f)|_{x=\lambda} T_{\lambda}$ is a diagonal matrix with both positive and negative 
entries, and we can find a $\mu \in \R^n \setminus \{0\}^n$ 
such that $\mu\tp T_{\lambda}\tp (\hess f)|_{x=\lambda} T_{\lambda} \mu = 0$.
This contradicts that $\mu\tp (\hess f)|_{x=\lambda} \mu \ne 0$ for all $\mu \in R^n \setminus \{0\}^n$,
which is satisfied by assumption because of \eqref{aneq}. Hence all eigenvalues of $(\hess f)|_{x=\lambda}$
have the same sign. By replacing $f$ by $-f$ when necessary, we may assume that all eigenvalues of 
$(\hess f)|_{x=\lambda}$ are positive.

From $\det \hess f \in \R^{*}$, it follows that all eigenvalues of $(\hess f)|_{x=\nu}$ are 
positive, because of the continuity of eigenvalues, see e.g. \cite[Th.\@ 3.1.2]{MR1923507}. 
Hence $T_{\nu}\tp (\hess f)|_{x=\nu} T_{\nu}$ is a diagonal matrix without negative 
entries, so $T_{\nu}\tp (\hess f)|_{x=\nu} T_{\nu}$ is positive definite.
Consequently, $(\hess f)|_{x=\nu}$ is positive definite as well. Since the 
main result of \cite{MR0319126} tells us that $\deg f = 2$ in case $\det \hess f \in \R^{*}$
and $(\hess f)|_{x=\nu}$ is positive definite for all $\nu \in \R^n$, the
proof is complete.
\end{proof}

\begin{corollary}
The Jacobian conjecture holds for
gradient maps over the reals whose linear part is the identity map. More precisely, the
corresponding Keller maps are translations.
\end{corollary}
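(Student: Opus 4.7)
The plan is to recognize the corollary as an immediate consequence of Theorem \ref{definite} applied with $K=\R$ and $\lambda=0$. Let $F=\grad f$ be a gradient map over $\R$ satisfying the Keller condition $\det \jac F = \det \hess f \in \R^{*}$ and assume that its linear part is the identity. I want to show that $F$ is a translation.

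First I would note that since the linear part of $\grad f$ is the identity map, the quadratic part $q$ of $f$ must satisfy $\grad q = x$, so $q = \tfrac12(x_1^2+x_2^2+\cdots+x_n^2)$ (up to an irrelevant additive constant and linear term, which are absorbed into the lower-degree parts of $f$). This quadratic form is positive definite over $\R$, hence in particular $q(\mu)\ne 0$ for every $\mu\in\R^n\setminus\{0\}^n$. Thus $f$ is anisotropic over $\R$ at $\lambda = 0$, because the quadratic part of $f(x+0)=f(x)$ is precisely $q$.

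Next I would invoke Theorem \ref{definite} with $K=\R$: the hypotheses $\det\hess f\in\R^{*}$ and anisotropy at some $\lambda\in\R^n$ (namely $\lambda=0$) are satisfied, so the theorem yields $\deg f = 2$. Combined with the shape of the quadratic part identified above, $f$ must be of the form
\[
f = \tfrac12\bigl(x_1^2+x_2^2+\cdots+x_n^2\bigr) + b\tp x + c
\]
for some $b\in\R^n$ and $c\in\R$. Taking the gradient gives $F=\grad f = x + b$, which is the translation by $b$. Hence the Keller map associated to the gradient map is a translation, which in particular establishes the Jacobian conjecture in this setting.

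There is no real obstacle: the work has all been done in Theorem \ref{definite}, which in turn packages the case $n\le 3$ of this paper together with Pogorelov's theorem (used exactly to cover the case $K=\R$ in arbitrary dimension). The only thing to verify carefully is the translation from ``linear part of $\grad f$ is the identity'' to ``the quadratic part of $f$ is $\tfrac12\sum x_i^2$, hence positive definite and thus anisotropic over $\R$'', which is immediate.
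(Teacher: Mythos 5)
Your proof is correct and follows exactly the paper's own route: apply Theorem \ref{definite} with $K=\R$ and $\lambda=0$, observing that the quadratic part of $f$ is $\tfrac12(x_1^2+\cdots+x_n^2)$, which is anisotropic over $\R$. You merely spell out the final step (that $\deg f=2$ forces $\grad f = x+b$) which the paper leaves implicit.
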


\begin{proof}
Take $K = \R$ and $\lambda = 0$ in theorem \ref{definite}, and notice that the quadratic part of $f$
is $\frac12(x_1^2 + x_2^2 + \cdots + x_n^2)$ in case the linear part of $\grad f$ is the identity map.
\end{proof}

\noindent
A problem for which we do not know the answer, is the following.

\begin{problem}
Does theorem \ref{definite} also hold for all fields $K$ when $n > 3$. 
\end{problem}

\noindent
In the following example, theorem \ref{definite} is applied in a situation where the base
field cannot be embedded into the reals.

\begin{example}
Let $f \in \Q(\I)[x] = \Q(\I)[x_1,x_2,\allowbreak \ldots,x_n]$ with
quadratic part $x_1^2 + 3 x_2^2 + \cdots + (2n-1)x_n^2$.
\begin{enumerate}

\item[i)] If $\det \hess f \in \Q(\I)$ and $n \le 3$, then $\deg f = 2$.

\item[ii)] $x_1^2 + 3 x_2^2 + 5 x_3^2 + 10 x_4^2 = 0$ does not have any nontrivial solution
over $\Q(\I)$. 

\end{enumerate}
\end{example}

\begin{proof}
Since ii) implies that $f$ is anisotropic at the origin, we deduce that
i) follows from ii) and theorem \ref{definite}. Notice that if $n \le 3$ and $\det \hess f$ 
is constant, then $\det \hess f$ is formed by $n$ consecutive digits of $120$, because 
(the constant part of) $\det \hess f$ is totally determined by the quadratic part of $f$.

To prove ii), assume that 
$$
c_1^2 + 3 c_2^2 + 5 c_3^2 + 10 c_4^2 = 0
$$
for certain $c_j \in \Q(\I)$ which are not all zero. Assume without loss of generality that 
$(c_1,c_2,c_3,c_4)$ is a primitive solution of  $x_1^2 + 3 x_2^2 + 5 x_3^2 + 10 x_4^2 = 0$,
i.e.\@ $c_j \in \Z[\I]$ for each $j$ and $\gcd\{c_1,c_2,c_3,c_4\} = 1$ over $\Z[\I]$.
The residue classes modulo $(2 + \I)$ in $\Z[\I]$ can be represented by numbers of $\Z$.
Since $5 = (2-\I)(2 + \I)$, we even have $\Z[\I]/(2 + \I) \cong \F_5$. Let
$\bar{c}_j$ be the element of $\F_5$ which corresponds to the residue class of $c_j$ modulo
$(2 + \I)$ for each $j$.

From $c_1^2 + 3 c_2^2 + 5 c_3^2 + 10 c_4^2 = 0$, we obtain that $\bar{c}_1^2 + 3 \bar{c}_2^2 = \bar{0}$.
But $\bar{c}_1^2 \in \{\bar{0},\bar{1},\bar{4}\}$ and $3\bar{c}_2^2 \in \{\bar{0},\bar{2},\bar{3}\}$,
thus $\bar{c}_1^2 = 3 \bar{c}_2^2 = 0$ and $2 + \I$ divides both
$c_1$ and $c_2$. Similarly, $2 - \I$ divides both
$c_1$ and $c_2$. So $5 \mid c_1$ and $5 \mid c_2$, and we have
$$
c_3^2 + 2 c_4^2 + 5 \left( \frac{c_1}{5} \right)^2 + 15 \left( \frac{c_2}{5} \right)^2 = 0
$$
which gives $5 \mid c_3$ and $5 \mid c_4$ in a similar manner as 
$c_1^2 + 3 c_2^2 + 5 c_3^2 + 10 c_4^2 = 0$ gave $5 \mid c_1$ and $5 \mid c_2$.
This contradicts $\gcd\{c_1,c_2,c_3,c_4\} = 1$.
\end{proof}

\bibliographystyle{consthess3}
\bibliography{consthess3}

\end{document}